 \newtheorem{thm}{Theorem}[section]
 \newtheorem{theorem}{Theorem}[section]
 \newtheorem{cor}[thm]{Corollary}
\newproof{proof}{\textbf{Proof}}
 \newtheorem{proposition}[thm]{Proposition}
 \newtheorem{definition}[thm]{Definition}
 \newtheorem{rem}[thm]{Remark}
 \newtheorem{example}{Example}
 \numberwithin{equation}{section}
\begin{document}

\begin{frontmatter}



\title{Some inequalities related to Heinz mean constant with Birkhoff orthogonality}


\author{Kallal Pal} 
\author{Sumit Chandok\corref{cor}}
\cortext[cor]{Corrosponding author.}\ead{sumit.chandok@thapar.edu}
\affiliation{organization={Department of Mathematics,\\ Thapar Institute of Engineering and Technology},
            city={Patiala},
            postcode={147004}, 
            state={Punjab},
            country={India}}
\begin{abstract}
Motivated by the work of  Baronti et al. [J. Math. Anal. Appl. 252(2000)
124-146], where they defined the supremum of an arithmetic mean of the side lengths of a triangle, summing antipodal points on the unit sphere, we introduce a new geometric constant for Banach spaces, utilizing the Heinz means that interpolate between the geometric and arithmetic means associated with Birkhoff orthogonality. We discuss the bounds in Banach spaces and find the values of constant in Hilbert spaces.  We obtain the characterization of uniformly non-square spaces. We investigate the correlation between our notion of the Heinz mean constant and other well-known terms, viz., the modulus of convexity, modulus of smoothness, and rectangular constant.
Furthermore, we also
 give a characterization of the Radon plane with an affine regular hexagonal unit sphere.
\end{abstract}



\begin{keyword}
Geometric constant, Birkhoff orthogonality, Banach space. 



\end{keyword}

\end{frontmatter}


\section{Introduction}
The quantitative framework for analyzing the geometry of Banach spaces is provided by the study of geometric constants, which is essential for addressing numerous problems in functional analysis. In addition to describing the basic structure of normed spaces, these constants show complex relationships that are both mathematically beautiful and novel. Using specified constants to study abstract properties of Banach spaces has led to impressive characterizations of inner product spaces and many fascinating open topics in modern mathematics.

Banach spaces do not accept a single or canonical notion of orthogonality, in contrast to Euclidean geometry; instead, several ideas of orthogonality exist, none of which is necessarily equivalent. Birkhoff \cite{birkhoff1935orthogonality} was the first to introduce a fundamental notion of orthogonality in 1935, and James \cite{james1947orthogonality} suggested isosceles orthogonality in 1947. 
These developments have stimulated extensive research on the interaction between generalized orthogonality and geometric constants, emphasizing the complexity of Banach space theory and its importance in both pure and applied mathematics.

Clarkson introduced the modulus of convexity \cite{clarkson1936uniformly} which controls quantitatively how far the midpoint of two distant unit vectors is from the unit sphere. If the modulus of convexity is non-negative, then the Banach space is uniformly convex. Banas \cite{banas1986moduli} introduced the modulus of smoothness, which measures how rounded the unit ball is inward (quantifying uniform smoothness).
In 1990, Gao and Lau \cite{gao1990geometry} defined the James constant, or
the non-square constant, to characterize uniformly non-square. 
 Recently, Baronti and Papini \cite{papini2022parameters} discussed James constant under the restriction of Birkhoff orthogonality. Inspired by the excellent works mentioned above, Du and Li \cite{du2023some} introduced two new constants: the modulus of convexity and the modulus of smoothness related to Birkhoff orthogonality, respectively. The meanings of these moduli related to Birkhoff orthogonality: if we take two vectors over the unit sphere satisfying Birkhoff orthogonality, then these moduli measure "how far" the middle point of the segment joining them must be from the unit sphere.\\
If $y$ and $-y$ are antipodal points on the unit sphere $S_X$, then $\|x+y\|$, $\|x-y\|$ 
 and 2 can be viewed as the lengths of the sides of the triangle $T_{xy}$
 with the vertices $x$, $y$ and $-y$
 lying on $S_X$. From a geometric point of view,
Baronti et al. \cite{baronti2000triangles} introduced a new constant $A_2(X)$ which
 can be regarded as the supremum of the arithmetic mean of the lengths of the sides of $T_{xy}$.

Du et al. \cite{du2025some} introduced a new geometric constant $A_2(X,B)$, which is closely associated with the modulus of smoothness 
 related to Birkhoff orthogonality.

For positive numbers $a,b>0$ and a parameter $\nu\in[0,1]$, Heinz mean is defined by
\begin{align*}
{H}_{\nu}(a,b)=\frac{a^\nu b^{1-\nu}+a^{1-\nu}b^\nu}{2}
\end{align*}

The Heinz mean interpolates smoothly between the geometric and arithmetic means, is convex in the parameter, attains its minimum at $\nu=\frac{1}{2}$, and satisfies the symmetry relation $H_{\nu}(X)=H_{1-\nu}(X)$
. These elegant properties make the Heinz mean an effective analytical bridge between multiplicative and additive structures, allowing for refined inequalities and deeper insights into convexity and smoothness.

Recently, Dinarvan \cite{dinarvand2019heinz} introduced new geometric constants based on averaging the lengths of the sides of triangle $T_{xy}$ by considering the Heinz
 mean, which are more general than the above constants for Banach spaces.

The Heinz mean, characterized by its symmetry and capacity to interpolate between multiplicative and additive structures, provides a natural framework for capturing delicate geometric features. At the same time,  Birkhoff orthogonality, closely associated with the geometric structure of Banach spaces, has long been recognized as a significant generalization of Euclidean orthogonality. By combining these two powerful ideas, we construct a novel parameter that extends the existing framework of geometric constants and highlights new connections between orthogonality, convexity, and smoothness. 

Based on these results, we present a geometric constant for Banach spaces, defined through the interaction of the Heinz mean and Birkhoff orthogonality. 

 The article's structure is as follows: After the introductory Section 1, we move on to Section 2, where we present some fundamental definitions, notations, and results. Section 3 defines a new geometric constant for Banach spaces, utilizing the Heinz means with Birkhoff orthogonality. We discuss the bounds in Banach spaces and find the values of constants in Hilbert spaces. We obtain the characterization of uniformly non-square spaces. We examine the relationship between the new geometric constants and the modulus of smoothness, modulus of convexity and the rectangular constant under Birkhoff orthogonality. Additionally, we provide a characterization of the Radon plane with an affine regular hexagonal unit sphere.

\section{Preliminaries}
The following are some notations and definitions that will be utilized in the subsequent sections.\\
Let $X$ be a Banach space and $S_X$ be a unit sphere of a Banach space $X$.
\begin{definition}
Let $x,y$ be two elements in a Hilbert space. Then an element $x \in X$ is said to be orthogonal 
to $y \in X$ (denoted by $x \perp y$) if $\langle x,y \rangle = 0$.   
\end{definition}

\begin{definition}\label{k51}\cite{birkhoff1935orthogonality}
In a normed linear space $X$, a vector $x$ is said to be Birkhoff-James orthogonal (BJ-orthogonality) to a vector $y$ ($x \perp_B y$) if the inequality $\|x+\lambda y\| \geq \|x\|$ holds for any real number $\lambda$.
\end{definition}

Recall that the space $X$ is called \textit{uniformly non-square} (see \cite{james1964uniformly}) 
if there exists $\delta > 0$ such that either 
\[
\frac{\|x-y\|}{2} \leq 1 - \delta, 
\quad \text{or} \quad 
\frac{\|x+y\|}{2} \leq 1 - \delta.
\]

\medskip
The James constant $J(X)$ and the Schäffer constant $S(X)$ are defined in \cite{gao1990geometry} as follows:
\[
J(X) = \sup \{ \min\{\|x+y\|, \|x-y\|\} : x,y \in S_X \},
\]
\[
S(X) = \inf \{ \max\{\|x+y\|, \|x-y\|\} : x,y \in S_X \}.
\]

Recently, Papini and Baronti  \cite{papini2022parameters} introduced the following constant:
\[
J(X,B) = \sup \{ \min\{\|x+y\|, \|x-y\|\} : x,y \in S_X,\, x \perp_B y \}.
\]
\begin{theorem}\label{j22}\cite{papini2022parameters}
Let $X$ be a Banach space. Then $J(X,B)=2$
 if and only if $X$ is not uniformly non-square.
\end{theorem}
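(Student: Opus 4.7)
The plan is to handle the two implications separately, with the converse immediate and essentially all the effort devoted to the forward direction.

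\textbf{Easy direction.} If $J(X,B)=2$, then because $J(X,B)$ is taken as a supremum over the smaller class of Birkhoff-orthogonal pairs, we have $J(X)\geq J(X,B)=2$; together with the elementary bound $J(X)\leq 2$, this forces $J(X)=2$. Unfolding the definition of uniform non-squareness given in the excerpt, the condition reads exactly $J(X)<2$, so $J(X)=2$ is precisely the negation, and $X$ is not uniformly non-square.

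\textbf{Hard direction.} Assume $X$ is not uniformly non-square, so one can pick $x_n,y_n\in S_X$ with $\|x_n+y_n\|,\|x_n-y_n\|\to 2$. The plan is to perturb this pair into a Birkhoff-orthogonal one preserving the asymptotic behavior. The key device is an averaging of norming functionals: choose $f_n,g_n\in S_{X^*}$ norming $x_n+y_n$ and $x_n-y_n$ respectively. Writing out $f_n(x_n+y_n)$ and $g_n(x_n-y_n)$ and using $\|f_n\|,\|g_n\|\leq 1$ forces $f_n(x_n),f_n(y_n),g_n(x_n)\to 1$ and $g_n(y_n)\to -1$. The average $h_n=(f_n+g_n)/2$ then satisfies $h_n(x_n)\to 1$, $h_n(y_n)\to 0$, and $\|h_n\|\to 1$, so the unit functional $\hat h_n=h_n/\|h_n\|$ almost norms $x_n$ and almost annihilates $y_n$.

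To upgrade this approximate orthogonality to a genuine one, I would invoke the Bishop-Phelps-Bollob\'as theorem applied to the pair $(\hat h_n,x_n)$, obtaining $F_n\in S_{X^*}$ and $x'_n\in S_X$ with $F_n(x'_n)=1$, $\|F_n-\hat h_n\|\to 0$, and $\|x'_n-x_n\|\to 0$. Then $|F_n(y_n)|\leq|\hat h_n(y_n)|+\|F_n-\hat h_n\|\to 0$, so setting $y'_n=y_n-F_n(y_n)\,x'_n$ gives $F_n(y'_n)=0$ with $F_n$ norming $x'_n$, hence $x'_n\perp_B y'_n$. Since $\|y'_n-y_n\|\to 0$, the triangle inequality gives $\|x'_n\pm y'_n\|\to 2$; normalizing $y'_n$ preserves Birkhoff orthogonality (scale-invariant in its second argument) and the limiting norm behavior, producing the desired sequence witnessing $J(X,B)\geq 2$.

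The main obstacle I foresee is this promotion step, where an approximate norming functional that nearly annihilates $y_n$ must be converted into an exact norming functional of a nearby vector. Bishop-Phelps-Bollob\'as handles it cleanly; as a more self-contained alternative, one could pass to an ultrapower of $X$ (where the averaged functional $h$ becomes exactly norming with $h(y)=0$, so $x\perp_B y$ holds directly) and then descend to sequences in $X$.
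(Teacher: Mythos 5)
Your argument is correct, but it follows a genuinely different route from the one the paper (and its source) uses. Note first that the paper states this theorem without proof, citing Papini--Baronti; the closest in-paper analogue is the proof of Theorem \ref{t11}, which contains the same essential difficulty: given $x,y\in S_X$ with $\|x\pm y\|$ both close to $2$, produce a Birkhoff-orthogonal pair on $S_X$ with the same property. There the authors work entirely on the primal side: they form an auxiliary vector $\varpi_2$, use convexity of $\alpha\mapsto\|y+\alpha\varpi_2\|$ to locate a minimizer $t$, and read off $y+t\varpi_2\perp_B\varpi_2$ from the characterization of Birkhoff orthogonality as norm-minimization along a line; the price is a page of explicit norm estimates to verify that the perturbed pair still has $\|\cdot\pm\cdot\|$ near $2$. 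You instead work on the dual side: norming functionals $f_n,g_n$ for $x_n\pm y_n$, their average $\hat h_n$ which almost norms $x_n$ and almost kills $y_n$, Bishop--Phelps--Bollob\'as to upgrade to an exact norming functional $F_n$ of a nearby $x_n'$, and then the James dual characterization ($F_n(x_n')=1=\|F_n\|$, $F_n(y_n')=0$ implies $x_n'\perp_B y_n'$) after the rank-one correction $y_n'=y_n-F_n(y_n)x_n'$. All steps check out: the limits $f_n(x_n),f_n(y_n),g_n(x_n)\to 1$, $g_n(y_n)\to-1$ follow from $\|f_n\|,\|g_n\|\le 1$, the perturbations $\|x_n'-x_n\|,\|y_n'-y_n\|\to 0$ preserve $\|x_n'\pm y_n'\|\to 2$, and normalizing $y_n'$ is harmless by homogeneity of $\perp_B$ in the second argument. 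Your easy direction ($J(X,B)\le J(X)$ plus the Gao--Lau characterization $J(X)<2$ iff uniformly non-square) matches the standard argument. What your approach buys is a short, conceptually transparent proof at the cost of invoking BPB (or an ultrapower); the paper's primal construction is more self-contained, using only convexity of the norm, but requires the lengthy quantitative bookkeeping visible in Theorem \ref{t11}.
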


Moreover, the following moduli related to Birkhoff orthogonality are defined by Du et al. \cite{du2023some} as follows:
\[
\delta_B(X) = \inf \left\{ 1 - \frac{\|x+y\|}{2} : x,y \in S_X,\, x \perp_B y \right\},
\]
\[
\rho_B(X) = \sup \left\{ 1 - \frac{\|x+y\|}{2} : x,y \in S_X,\, x \perp_B y \right\}.
\]

Joly \cite{joly1969caracterisations} defined the rectangular constant  $\mu'(X,B)$  as 
\[
\mu'(X,B) = \sup\left\{\frac{2}{\|x+y\|} : x,y \in S_X,\, x \perp_B y \right\}.
\]

Baronti et al. \cite{baronti2000triangles} defined another constant $A_2(X)$ as
\begin{align*}
A_2(X)=\left\{\frac{\|x+y\|+\|x-y\|}{2}: x,y \in S_X\right\}. 
\end{align*}
Du et al. \cite{du2025some} generalized the above constant using Birkhoff orthogonality and defined geometric constant $A_2(X,B)$ as
\begin{align*}
A_2(X,B)=\left\{\frac{\|x+y\|+\|x-y\|}{2}: x,y \in S_X, x\perp_B y\right\}. 
\end{align*}

\section{Main Results}

 For a given Banach space $X$, we define the following notion of Heinz mean:
\begin{align}\label{aq1}
\nonumber H_{\nu}(X,B) &= \sup_{x,y \in S_X} M_{\nu}( \|x+y\|, \|x-y\|)\\
&= \sup\left\{\frac{\|x+y\|^{\nu}\|x-y\|^{1-\nu} + \|x+y\|^{1-\nu}\|x-y\|^{\nu}}{2}: x,y \in S_X, x\perp_B y\right\},
\end{align}
where $0 \leq \nu \leq 1$.

\noindent Here, it is easy to see that
\begin{align*}
\min\{\|x+y\|, \|x-y\|\} &\leq \sqrt{\|x+y\| \|x-y\|}\\
&\leq \frac{\|x+y\|^{\nu}\|x-y\|^{1-\nu} + \|x+y\|^{1-\nu}\|x-y\|^{\nu}}{2}\\
&\leq \frac{\|x+y\| + \|x-y\|}{2}.
\end{align*}
So, it follows that
\begin{equation}\label{eq1}
J(X,B) \leq H_{\nu}(X,B) \leq A_2(X,B).
\end{equation}
\begin{proposition}\label{j1}
For a Banach space $X$, and $0 \leq \nu \leq 1$, we have $1 \leq H_{\nu}(X) \leq 2$.
\end{proposition}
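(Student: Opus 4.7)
The plan is to obtain both inequalities from the basic size constraints on $\|x+y\|$ and $\|x-y\|$ when $x,y\in S_X$ and $x\perp_B y$, then push them through the Heinz mean using the sandwich already displayed in (\ref{eq1}).

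For the upper bound $H_\nu(X,B)\le 2$, I would invoke the triangle inequality to obtain $\|x+y\|\le 2$ and $\|x-y\|\le 2$ for every $x,y\in S_X$. Substituting these into the Heinz mean gives
\begin{equation*}
\frac{\|x+y\|^\nu\|x-y\|^{1-\nu}+\|x+y\|^{1-\nu}\|x-y\|^\nu}{2}\le \frac{2^\nu\cdot 2^{1-\nu}+2^{1-\nu}\cdot 2^\nu}{2}=2,
\end{equation*}
so taking the supremum yields the bound. Equivalently, one can note that $A_2(X,B)\le 2$ by the same triangle inequality and then quote the right half of (\ref{eq1}).

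For the lower bound $H_\nu(X,B)\ge 1$, the key step is Birkhoff orthogonality itself: if $x\perp_B y$, then by Definition \ref{k51} with $\lambda=\pm 1$ we get $\|x+y\|\ge\|x\|=1$ and $\|x-y\|\ge\|x\|=1$. Substituting these into the Heinz mean gives a value at least $\tfrac{1\cdot 1+1\cdot 1}{2}=1$, which persists after taking the supremum (using the standard fact that for every $x\in S_X$ there exists $y\in S_X$ with $x\perp_B y$, so the supremum set is nonempty). Alternatively, this is just the left half of (\ref{eq1}) combined with the observation $J(X,B)\ge 1$, which follows from the same Birkhoff estimate applied to $\min\{\|x+y\|,\|x-y\|\}$.

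The argument is essentially routine; there is no real obstacle, only the minor bookkeeping of ensuring the supremum set is nonempty so that $H_\nu(X,B)$ is well defined and inherits the pointwise bound $\ge 1$. Both estimates are sharp in the Hilbert case, where $\|x\pm y\|=\sqrt{2}$ under orthogonality gives $H_\nu=\sqrt{2}$, consistent with the bounds.
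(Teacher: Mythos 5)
Your proposal is correct and follows essentially the same route as the paper: the lower bound comes from $\|x\pm y\|\ge\|x\|=1$ via Birkhoff orthogonality with $\lambda=\pm1$, and the upper bound from the triangle inequality $\|x\pm y\|\le 2$ substituted into the Heinz mean. Your added remark about the nonemptiness of the supremum set is a small but worthwhile precision the paper leaves implicit.
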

\begin{proof}
  For $x,y \in S_X$ and $x\perp_B y$, we have $\|x\pm y\|\geq \|x\|=1$. From \eqref{aq1}, we have 
 \begin{align*}
 H_{\nu}(X,B)&= \sup\left\{\frac{\|x+y\|^{\nu}\|x-y\|^{1-\nu} + \|x+y\|^{1-\nu}\|x-y\|^{\nu}}{2}: x,y \in S_X, x\perp_B y\right\}\\
 &\geq \sup\left\{\frac{\|x+y\|^{\nu}\|x-y\|^{1-\nu} + \|x+y\|^{1-\nu}\|x-y\|^{\nu}}{2}: x,y \in S_X, \|x\pm y\|\geq 1 \right\}\\
 &\geq 1.    
 \end{align*}
 Using triangle inequality for $x,y \in S_X$ and $x\perp_B y$, we have
  \begin{align*}
 \frac{\|x+y\|^{\nu}\|x-y\|^{1-\nu} + \|x+y\|^{1-\nu}\|x-y\|^{\nu}}{2}\leq 2.    
\end{align*}
Taking suprimum over $x,y \in S_X$ and $x\perp_B y$, we get  $H_{\nu}(X,B)\leq 2$.
\end{proof}

\begin{example}
Let $X=\mathbb{R}^2$ endowed with a uniform norm
\[
\|x\| = \|(x_1, x_2)\| =
\|(x_1, x_2)\|_\infty.
\]
 Let 
$x = (1,1)$ and  $y= (-1,1)$.
It is clear that $x,y \in S_X$. Actually, we also have $x \perp_B y$. It is easy to see that $x \perp_B y$. Now, $\|x+y\|=2$ and $\|x-y\|=2$.
Here, it is easy to see that for all $0\leq \nu\leq 1$, we have 
\begin{align*}
H_\nu(X,B)\geq\frac{\|x+y\|^{\nu}\|x-y\|^{1-\nu} + \|x+y\|^{1-\nu}\|x-y\|^{\nu}}{2}=\frac{2^\nu.2^{1-\nu}+2^\nu.2^{1-\nu}}{2}=2.
\end{align*}
Therefore, $H_\nu(X,B)=2$.
\end{example}

\begin{proposition}\label{j01}
For a Hilbert space $X$, we have $ H_{\nu}(X,B)=\sqrt{2}$, for all $0 \leq \nu \leq 1$.
\end{proposition}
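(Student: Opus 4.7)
The plan is to exploit the fact that in a Hilbert space, Birkhoff orthogonality coincides with the standard inner-product orthogonality, so the supremum in the definition of $H_\nu(X,B)$ reduces to a single numerical value rather than a genuine supremum.

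First, I would recall (or cite) the well-known fact that in a Hilbert space $X$, for $x,y \in S_X$, the condition $x \perp_B y$ is equivalent to $\langle x, y\rangle = 0$. Once this identification is in place, the Pythagorean identity gives $\|x\pm y\|^2 = \|x\|^2 \pm 2\langle x,y\rangle + \|y\|^2 = 2$, so that $\|x+y\| = \|x-y\| = \sqrt{2}$ for every admissible pair $(x,y)$.

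The second step is just substitution into the Heinz-mean expression. Since both sidelengths equal $\sqrt 2$, we get
\begin{align*}
\frac{\|x+y\|^{\nu}\|x-y\|^{1-\nu} + \|x+y\|^{1-\nu}\|x-y\|^{\nu}}{2}
&= \frac{(\sqrt{2})^{\nu}(\sqrt{2})^{1-\nu} + (\sqrt{2})^{1-\nu}(\sqrt{2})^{\nu}}{2} \\
&= \frac{\sqrt{2}+\sqrt{2}}{2} = \sqrt{2},
\end{align*}
independently of $\nu$ and of the chosen pair $(x,y)$. Taking the supremum over all admissible $(x,y)$ therefore yields $H_\nu(X,B) = \sqrt{2}$.

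I do not anticipate any real obstacle: the whole argument rests on the symmetry of the Pythagorean identity, which forces $\|x+y\|=\|x-y\|$, at which point the geometric and arithmetic means collapse to the same value and the Heinz interpolation parameter $\nu$ becomes irrelevant. The only subtlety worth mentioning explicitly is the equivalence of Birkhoff orthogonality and inner-product orthogonality in Hilbert spaces, which should be stated (and if desired, briefly justified by observing that the minimum of $\lambda \mapsto \|x+\lambda y\|^2$ occurs at $\lambda = -\langle x,y\rangle/\|y\|^2$, so the condition $\|x+\lambda y\| \geq \|x\|$ for all $\lambda$ forces $\langle x,y\rangle = 0$).
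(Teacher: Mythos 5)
Your proposal is correct and follows essentially the same route as the paper: identify Birkhoff orthogonality with inner-product orthogonality, use the Pythagorean identity to get $\|x\pm y\|=\sqrt{2}$ for every admissible pair, and observe that the Heinz mean then equals $\sqrt{2}$ identically, so the supremum is $\sqrt{2}$. Your version is in fact slightly more complete, since the paper asserts $\|x\pm y\|=\sqrt{2}$ without spelling out the equivalence of the two orthogonality notions, which you justify explicitly.
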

\begin{proof}
Suppose that $x,y \in S_X$ and $x\perp_B y$. So, $\|x\pm y\|=\sqrt{2}$.
Consider,
\begin{align*}
\frac{\|x+y\|^{\nu}\|x-y\|^{1-\nu} + \|x+y\|^{1-\nu}\|x-y\|^{\nu}}{2}=\frac{2\sqrt{2}}{2}=\sqrt{2}.    
\end{align*}
Taking supremum over $x,y \in S_X$, we have $H_{\nu}(X,B)=\sqrt{2}$.
\end{proof}
 The following example tells us that this conclusion may not hold that $X$ must be a Hilbert space when $H_{\nu}(X,B) = \sqrt{2}$.
\begin{example}
 Let $X$ be the space $\mathbb{R}^2$ endowed with the norm
\[
\|x\| = \|(x_1, x_2)\| = \max \left\{ |x_1|, |x_2|, \; \tfrac{1}{\sqrt{2}} (|x_1| + |x_2|) \right\}.
\]
Taking 
\[
x = \left( \tfrac{1}{\sqrt{2}}, \tfrac{1}{\sqrt{2}} \right), 
\quad 
y = \left( -\tfrac{1}{\sqrt{2}}, \tfrac{1}{\sqrt{2}} \right).
\]
It is obvious that $x, y \in S_X$ with $x \perp_B y$. And we also have
\[
\|x+y\| = \|x-y\| = \sqrt{2}.
\]
From the definition of $H_{\nu}(X,B)$, we have
\[
H_{\nu}(X,B)\geq\frac{\|x+y\|^{\nu}\|x-y\|^{1-\nu} + \|x+y\|^{1-\nu}\|x-y\|^{\nu}}{2}=\sqrt{2}.
\]
Therefore, $H_{\nu}(X,B)=\sqrt{2}$.
\end{example}

\begin{proposition}\label{pq1}
 For a Banach space $X$, \[J(X,B) \leq H_{\nu}(X,B)\leq {2^{\nu-1}[J(X,B)]^{1-\nu}+2^{-\nu}[J(X,B)]^{\nu}}.\]   
\end{proposition}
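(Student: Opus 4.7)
The lower bound $J(X,B) \leq H_{\nu}(X,B)$ is already recorded in \eqref{eq1}, so I only need to establish the upper estimate.

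The plan is to fix arbitrary $x,y \in S_X$ with $x\perp_B y$, write $a := \|x+y\|$, $b := \|x-y\|$, and exploit two elementary facts about this pair: first, by the triangle inequality both $a$ and $b$ are at most $2$; second, by the very definition of $J(X,B)$, the smaller of the two norms satisfies $\min\{a,b\} \leq J(X,B)$. Since the Heinz-type expression
\[
\frac{a^{\nu}b^{1-\nu}+a^{1-\nu}b^{\nu}}{2}
\]
is symmetric in $a$ and $b$, I may assume without loss of generality that $a \leq b$, so that $a \leq J(X,B)$ and $b \leq 2$.

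Because $0 \leq \nu \leq 1$, both exponents $\nu$ and $1-\nu$ are nonnegative, so I can bound each of the two monomials separately by substituting the upper estimates $a \leq J(X,B)$ and $b \leq 2$. This yields
\[
a^{\nu}b^{1-\nu} \leq [J(X,B)]^{\nu}\, 2^{1-\nu}, \qquad a^{1-\nu}b^{\nu} \leq [J(X,B)]^{1-\nu}\, 2^{\nu}.
\]
Adding and dividing by $2$ produces the displayed bound
\[
\frac{a^{\nu}b^{1-\nu}+a^{1-\nu}b^{\nu}}{2} \leq 2^{-\nu}[J(X,B)]^{\nu} + 2^{\nu-1}[J(X,B)]^{1-\nu}.
\]
Taking the supremum over admissible pairs $(x,y)$ then delivers the claimed inequality.

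There is no serious obstacle here; the only point that deserves a brief word of justification is the symmetry remark that allows the WLOG reduction to $a \leq b$, which is immediate from swapping $\nu \leftrightarrow 1-\nu$ in the two summands. Everything else is a direct application of the triangle inequality, the definition of $J(X,B)$, and the monotonicity of $t \mapsto t^{\alpha}$ for $\alpha \in [0,1]$ on $[0,\infty)$.
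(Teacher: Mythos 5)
Your proof is correct and follows essentially the same route as the paper: the paper splits into two cases according to which of $\|x+y\|$, $\|x-y\|$ is the minimum, bounds the minimum by $J(X,B)$ and the other factor by $2$ in each monomial, and takes the supremum, which is exactly your WLOG reduction in disguise. If anything, your version is slightly cleaner since you bound each pair pointwise by a constant before taking the supremum, avoiding any implicit appeal to monotonicity of $t\mapsto 2^{\nu}t^{1-\nu}+2^{1-\nu}t^{\nu}$.
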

\begin{proof}
From \eqref{eq1}, we have 
$J(X,B) \leq H_{\nu}(X,B)$.\\
Case 1.: If $
\min\{\|x+y\|,\|x-y\|\}=\|x-y\|$, we have 
\begin{align*}
\|x+y\|^\nu\|x-y\|^{1-\nu}&=\|x+y\|^\nu[\min\{\|x+y\|,\|x-y\|\}]^{1-\nu}\\
&\leq (\|x\|+\|y\|)^\nu[\min\{\|x+y\|,\|x-y\|\}]^{1-\nu}\\
&=2^\nu[\min\{\|x+y\|,\|x-y\|\}]^{1-\nu},
\end{align*}
and
\begin{align*}
\|x-y\|^\nu\|x+y\|^{1-\nu}&=[\min\{\|x+y\|,\|x-y\|\}]^{\nu}\|x+y\|^{1-\nu}\\
&\leq [\min\{\|x+y\|,\|x-y\|\}]^{\nu}(\|x\|+\|y\|)^{1-\nu}\\
&=2^{1-\nu}[\min\{\|x+y\|,\|x-y\|\}]^{\nu}.
\end{align*}
Case 2.: If $
\min\{\|x+y\|,\|x-y\|\}=\|x+y\|$, similarly we have
\[\|x+y\|^\nu\|x-y\|^{1-\nu}\leq 2^{1-\nu}[\min\{\|x+y\|,\|x-y\|\}]^{\nu},\] and
\[\|x-y\|^\nu\|x+y\|^{1-\nu}\leq 2^{\nu}[\min\{\|x+y\|,\|x-y\|\}]^{1-\nu}.\]

From both the cases, we have
\begin{align*}
&\frac{\|x+y\|^{\nu}\|x-y\|^{1-\nu} + \|x+y\|^{1-\nu}\|x-y\|^{\nu}}{2}\\
& \leq \frac{2^\nu[\min\{\|x+y\|,\|x-y\|\}]^{1-\nu}+2^{1-\nu}[\min\{\|x+y\|,\|x-y\|\}]^{\nu}}{2}.
\end{align*}
Taking the supremum over $x,y \in S_X$ with $x\perp_B y$, we have
\begin{align*}
H_{\nu}(X,B)\leq\frac{2^\nu[J(X,B)]^{1-\nu}+2^{1-\nu}[J(X,B)]^{\nu}}{2}\\ 
={2^{\nu-1}[J(X,B)]^{1-\nu}+2^{-\nu}[J(X,B)]^{\nu}}
\end{align*}
\end{proof}
\begin{theorem}\label{t11}
A Banach space $X$ with $H_\nu(X,B)< 2$,  is uniformly non-square.
\end{theorem}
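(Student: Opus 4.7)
The plan is to reduce the statement directly to Theorem \ref{j22}, which asserts that $X$ is uniformly non-square if and only if $J(X,B) < 2$. Thus it suffices to deduce from the hypothesis $H_\nu(X,B) < 2$ the sharper inequality $J(X,B) < 2$.

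First I would recall inequality \eqref{eq1}, namely $J(X,B) \leq H_\nu(X,B) \leq A_2(X,B)$, which was established immediately after the definition of $H_\nu(X,B)$ from the elementary bound $\min\{\|x+y\|,\|x-y\|\} \leq \sqrt{\|x+y\|\,\|x-y\|}$ combined with the AM--GM inequality applied to the pair $\|x+y\|^\nu\|x-y\|^{1-\nu}$ and $\|x+y\|^{1-\nu}\|x-y\|^\nu$. (Alternatively, the same bound $J(X,B) \leq H_\nu(X,B)$ appears as the first inequality in Proposition \ref{pq1}.) In particular, the lower bound $J(X,B) \leq H_\nu(X,B)$ is available with no further work.

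Next I would simply chain: by hypothesis and the inequality just recalled,
\[
J(X,B) \;\leq\; H_\nu(X,B) \;<\; 2.
\]
Since by Theorem \ref{j22} the equality $J(X,B)=2$ characterizes the failure of uniform non-squareness, the strict inequality $J(X,B)<2$ forces $X$ to be uniformly non-square, which completes the argument.

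There is essentially no main obstacle: the result is a direct corollary of Proposition \ref{pq1} (or of \eqref{eq1}) combined with Theorem \ref{j22}, so the ``proof'' is a two-line invocation of previously established facts and does not require any new estimate on $\|x \pm y\|$ or any construction of vectors realizing extremal configurations.
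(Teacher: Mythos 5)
Your proposal is correct, and it takes a genuinely different route from the paper. The paper proves the contrapositive by a direct construction: assuming $X$ is not uniformly non-square, it takes $x,y\in S_X$ with $\|x\pm y\|\ge 2-\varepsilon^2$, builds a Birkhoff-orthogonal pair $\bar\varpi_1\perp_B\bar\varpi_2$ on $S_X$ via a convexity argument on $\phi(\alpha)=\|y+\alpha\varpi_2\|$, and estimates $\|\bar\varpi_1\pm\bar\varpi_2\|$ from below. You instead chain $J(X,B)\le H_\nu(X,B)<2$ through \eqref{eq1} and invoke the cited Theorem~\ref{j22} of Papini--Baronti; since $J(X,B)\le 2$ always, $J(X,B)\ne 2$ forces uniform non-squareness. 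Your reduction is not only shorter but arguably sounder: the paper's own estimates give $\|\bar\varpi_1+\bar\varpi_2\|\to 2$ but only $\|\bar\varpi_1-\bar\varpi_2\|\ge 1-4\varepsilon\to 1$, so the displayed lower bound for $H_\nu(X,B)$ tends to $\tfrac{2^{\nu}+2^{1-\nu}}{2}\le \tfrac32$, not to $2$ as claimed in the final line; the concluding contradiction therefore does not follow from the bounds as derived. Your argument sidesteps this entirely because Theorem~\ref{j22} already guarantees Birkhoff-orthogonal unit pairs with \emph{both} $\|u+v\|$ and $\|u-v\|$ arbitrarily close to $2$, whose Heinz mean is then close to $2$. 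The only cost of your approach is that it outsources the hard construction to the external result Theorem~\ref{j22}, whereas the paper (unsuccessfully, as written) attempts a self-contained proof; note also that the paper itself uses exactly your style of reduction in the proof of Theorem~\ref{j3}, so nothing is lost in rigor or consistency.
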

\begin{proof}
On the contrary, we suppose that $X$ is not uniformly non-square.
Then there exist $x,y \in S_X$ such that
\[
\|x \pm y\| \ge 2-\varepsilon^2, ~\text{for}~ \varepsilon \in (0,1/2).
\]
On rewriting we obtain
\[
2-\varepsilon^2 \le \|x+y\|
\le \|x+\varepsilon y\| + \|(1-\varepsilon)y\|
= \|x+\varepsilon y\| + 1-\varepsilon.
\]
Hence
$\|x+\varepsilon y\| \ge 1+\varepsilon-\varepsilon^2 > 1$. Also, $\|x+\varepsilon y\|\leq 1+\varepsilon$.

Define $\varpi_2=\frac{\|x+\varepsilon y\|-1}{\varepsilon}x-y$, 
$\bar \varpi_2=\frac{\varpi_2}{\|\varpi_2\|}$,
and $\phi:[0,1]\to\mathbb{R}$ by $\phi(\alpha)=\|y+\alpha \varpi_2\|$.
It is easy to see that $\phi$ is convex and 
$\phi\!\left(\frac{1}{\|x+\varepsilon y\|}\right)
=\frac{\|x+\varepsilon y\|-1}{\varepsilon}
=\phi(1)$.
Thus, $\phi$ attains its minimum at some
$t\in\left[\frac{1}{\|x+\varepsilon y\|},\,1\right]
\subset\left[{1+\varepsilon},1\right]
\quad (\text{since } \|x+\varepsilon y\|\le 1+\varepsilon)$,
which implies that $y+t \varpi_2 \perp_B \varpi_2$.

Choose $\varpi_1=y+t \varpi_2$ and $\bar \varpi_1=\varpi_1/\|\varpi_1\|$.
Since Birkhoff orthogonality is homogeneous, $\bar \varpi_1\perp_B \bar \varpi_2$ with $\|\bar \varpi_1\|=\bar \varpi_2\|=1$.

By the convexity of the norm,
$1-\varepsilon^2 \le \|s x \pm (1-s)y\| \le 1$, for all  $s\in[0,1]$. It implies that
$\|\alpha x \pm \beta y\|
\in \bigl[(1-\varepsilon^2)(\alpha+\beta),\,\alpha+\beta\bigr]$
for all non-negative scalars $\alpha,\beta$.\\
Hence,
\begin{align*}
\|\varpi_2\|&
=\left\|\frac{\|x+\varepsilon y\|-1}{\varepsilon}x-y\right\|
\in \bigl[(1-\varepsilon^2)(2-\varepsilon),\,2\bigr],\\
\|\varpi_1\|&
=\left\|t\frac{\|x+\varepsilon y\|-1}{\varepsilon}x+(1-t)y\right\|
\in \bigl[(1-\varepsilon^2)(1-\varepsilon),\,1\bigr],\\
\left\|\varpi_1+\frac{\varpi_2}{2}\right\|&
=\left\|\left(t+\frac12\right)\frac{\|x+\varepsilon y\|-1}{\varepsilon}x
-\left(t-\frac12\right)y\right\|
\ge (1-\varepsilon^2)\left(\frac{2}{1+\varepsilon}-\frac{3\varepsilon}{2}\right),\\
&\hspace{-1cm}\text{and}~~
\left\|\varpi_1-\frac{\varpi_2}{2}\right\|\geq \|u\|\geq (1-\epsilon^2)(1-\epsilon).
\end{align*}
Therefore
\begin{align*}
\|\bar \varpi_1+\bar \varpi_2\|
&\ge \left\|\varpi_1+\frac{\varpi_2}{2}\right\|
 - \|\varpi_1-\bar \varpi_1\| - \left\|\bar \varpi_2-\frac{\varpi_1}{2}\right\| \\
&= \left\|\varpi_1+\frac{\varpi_2}{2}\right\| - (1-\|\varpi_1\|) - \frac{2-\|\varpi_2\|}{2} \\
&\ge \frac{2-2\varepsilon^2}{1+\varepsilon} - 3\varepsilon-2\epsilon^2\geq \frac{2-2\varepsilon^2}{1+\varepsilon} - 5\varepsilon\\
&=\frac{2-5\varepsilon-7\varepsilon^2}{1+\varepsilon}.
\end{align*}
and 
\begin{align*}
 \|\bar \varpi_1-\bar \varpi_2\| &\ge \left\|\varpi_1-\frac{\varpi_2}{2}\right\|
 - \|\varpi_1-\bar \varpi_1\| - \left\|\bar \varpi_2-\frac{\varpi_2}{2}\right\| \\  
&= \left\|\varpi_1-\frac{\varpi_2}{2}\right\| - (1-\|\varpi_1\|) - \frac{2-\|\varpi_2\|}{2} \\ 
&\geq 1-\frac{5\varepsilon}{2}-3\varepsilon^2+\varepsilon^3\\
&\geq 1-4\varepsilon.
\end{align*}
Using the definition, we get
\begin{align*}
H_\nu(X,B)&\geq\frac{\|\bar \varpi_1+\bar \varpi_2\|^{\nu}\|\bar \varpi_1-\bar \varpi_2\|^{1-\nu} + \|\bar \varpi_1+\bar \varpi_2\|^{1-\nu}\|\bar \varpi_1-\bar \varpi_2\|^{\nu}}{2}\\
&\geq \frac{(\frac{2-5\varepsilon-7\varepsilon^2}{1+\varepsilon})^\nu(1-4\epsilon)^{1-\nu}+(\frac{2-5\varepsilon-7\varepsilon^2}{1+\varepsilon})^{1-\nu}(1-4\epsilon)^\nu}{2}.
\end{align*}
Letting $\varepsilon\rightarrow 0$, we get $H_\nu(X,B)\geq 2$, a contradiction.
\end{proof}

\begin{theorem}\label{j3}
Let $X$ be a Banach space. Then $H_{\nu}(X,B)=2$
 if and only if $X$ is not uniformly non-square.
\end{theorem}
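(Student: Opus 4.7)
The plan is to derive the equivalence directly from Theorem \ref{j22} (namely $J(X,B)=2$ if and only if $X$ is not uniformly non-square) together with the chain of inequalities $J(X,B) \leq H_{\nu}(X,B) \leq 2^{\nu-1}[J(X,B)]^{1-\nu} + 2^{-\nu}[J(X,B)]^{\nu}$ obtained from \eqref{eq1} and Proposition \ref{pq1}, combined with the universal upper bound $H_{\nu}(X,B)\leq 2$ from Proposition \ref{j1}. Together these sandwich $H_{\nu}(X,B)$ between functions of $J(X,B)$ that both equal $2$ precisely when $J(X,B)=2$, so the characterization should fall out by pushing $J(X,B)$ through Theorem \ref{j22}.

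For the backward implication ($X$ not uniformly non-square $\Rightarrow H_{\nu}(X,B) = 2$), I would apply Theorem \ref{j22} to obtain $J(X,B) = 2$, then use the lower bound $H_{\nu}(X,B) \geq J(X,B) = 2$ from \eqref{eq1} together with $H_{\nu}(X,B) \leq 2$ to conclude equality. This direction is essentially free, as it is also the contrapositive of the already-established Theorem \ref{t11}. For the forward implication, I would argue by contrapositive: assuming $X$ is uniformly non-square, Theorem \ref{j22} yields $J(X,B) < 2$, so setting $t = J(X,B)/2 \in [0,1)$, the bound of Proposition \ref{pq1} collapses via the identity $2^{\nu-1}(2t)^{1-\nu} + 2^{-\nu}(2t)^{\nu} = t^{1-\nu} + t^{\nu}$ to $H_{\nu}(X,B) \leq t^{1-\nu} + t^{\nu}$. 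Since $t^{\alpha} < 1$ whenever $t \in [0,1)$ and $\alpha > 0$, this strictly beats $2$ when $\nu \in (0,1)$, and the endpoint cases $\nu \in \{0,1\}$ reduce to $1 + t < 2$.

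I do not expect any genuine obstacle. The only mildly non-trivial step is the algebraic simplification of the Heinz-type upper bound to $t^{1-\nu} + t^{\nu}$, after which the strict inequality on $t \in [0,1)$ is immediate. Conceptually, the content of the theorem is just that the two inequalities in Propositions \ref{j1} and \ref{pq1} become simultaneously tight exactly on the class of spaces that fail to be uniformly non-square.
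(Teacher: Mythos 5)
Your proposal is correct and follows essentially the same route as the paper: both directions hinge on Theorem \ref{j22} together with the sandwich $J(X,B)\leq H_{\nu}(X,B)\leq 2^{\nu-1}[J(X,B)]^{1-\nu}+2^{-\nu}[J(X,B)]^{\nu}$ from \eqref{eq1} and Proposition \ref{pq1}. The only (cosmetic) difference is in the final estimate: you simplify the upper bound directly to $t^{1-\nu}+t^{\nu}<2$ with $t=J(X,B)/2<1$, whereas the paper bounds it by $\tfrac{2+J(X,B)}{2}$ via the weighted arithmetic--geometric mean inequality; both yield the same conclusion.
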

\begin{proof}
Let, $X$ be not uniformly non-square. Then $J(X,B)=2$. From Theorem \ref{j22}, it is easy to see that $H_\nu(X,B)=2$.\\
Conversely, let $H_\nu(X,B)=2$. From the inequality of Theorem \ref{j22}, we have $J(X,B)\leq2$ and 
\begin{align*}
 2&\leq\frac{2^\nu[J(X,B)]^{1-\nu}+2^{1-\nu}[J(X,B)]^{\nu}}{2}\\
 &\leq \frac{2\nu+(1-\nu)[J(X,B)]+2(1-\nu)+\nu[J(X,B)]}{2}\\
 &=\frac{2+J(X,B)}{2}.
\end{align*}
This implies that $J(X,B)\geq 2$. Hence $J(X,B)=2$.
\end{proof}
\begin{rem}
    According to the definition of uniform convexity, it is easy to see that uniform convexity implies uniform non-squareness. Therefore, $H_{\nu}(X,B)=2$ for all $\alpha>\beta>0$ implies that $X$ is not a uniformly convex Banach space.
\end{rem}

\begin{proposition}
For any $\epsilon>0$, there  exists a uniformly convex Banach space $X$ such that $H_{\nu}(X,B)>2-\epsilon$.
\end{proposition}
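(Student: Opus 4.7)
The plan is to exhibit a concrete family of uniformly convex spaces whose Heinz mean constant can be pushed arbitrarily close to the upper bound $2$, and the natural candidate is the two–dimensional $\ell_p$ space $\ell_p^2$ for $p$ close to $1$. It is classical that $\ell_p$ is uniformly convex whenever $1<p<\infty$ (Clarkson's inequalities), so $\ell_p^2$ is uniformly convex. The geometric input we need is therefore just a single pair of unit vectors that are Birkhoff orthogonal and whose sum and difference both have large norm.

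First I would take the canonical basis vectors $x=(1,0)$ and $y=(0,1)$ in $S_{\ell_p^2}$. The Birkhoff orthogonality $x\perp_B y$ is immediate, since $\|x+\lambda y\|^p=1+|\lambda|^p$ attains its minimum at $\lambda=0$, so $\|x+\lambda y\|\geq 1=\|x\|$ for every real $\lambda$. A direct computation then gives $\|x+y\|=\|x-y\|=2^{1/p}$.

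Plugging these values into the definition of $H_{\nu}(X,B)$ we obtain
\begin{align*}
H_{\nu}(\ell_p^2,B)
&\ge \frac{\|x+y\|^{\nu}\|x-y\|^{1-\nu}+\|x+y\|^{1-\nu}\|x-y\|^{\nu}}{2}\\
&=\frac{(2^{1/p})^{\nu}(2^{1/p})^{1-\nu}+(2^{1/p})^{1-\nu}(2^{1/p})^{\nu}}{2}=2^{1/p}.
\end{align*}
Given $\varepsilon>0$, since $2^{1/p}\to 2$ as $p\to 1^{+}$, we can pick $p>1$ sufficiently close to $1$ so that $2^{1/p}>2-\varepsilon$. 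Setting $X=\ell_p^2$ for such a $p$ then yields a uniformly convex Banach space with $H_{\nu}(X,B)\ge 2^{1/p}>2-\varepsilon$, and since $\nu\in[0,1]$ was arbitrary the estimate holds uniformly in $\nu$.

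There is no real obstacle here; the only point that merits a line of justification is the Birkhoff orthogonality of the coordinate vectors in $\ell_p^2$, which follows from the strict monotonicity of $t\mapsto 1+|t|^p$ at $t=0$. Everything else is a straight substitution into the defining supremum, combined with the well-known uniform convexity of $\ell_p$ for $1<p<\infty$.
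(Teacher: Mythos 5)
Your proof is correct and follows essentially the same strategy as the paper: both exhibit a single Birkhoff-orthogonal pair in $\ell_p^2$ with $\|x+y\|=\|x-y\|$ tending to $2$, and invoke Clarkson's uniform convexity of $\ell_p$ for $1<p<\infty$. The only difference is cosmetic --- the paper sends $p\to\infty$ using the vectors $\bigl(\pm 2^{-1/p},\,2^{-1/p}\bigr)$, obtaining $2^{1-1/p}\to 2$, whereas you send $p\to 1^{+}$ using the coordinate basis, obtaining $2^{1/p}\to 2$.
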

\begin{proof}
Fix $\varepsilon > 0$. Notice that $\lim\limits_{p \to \infty} 2^{1 - 1/p} = 2$. Then there exists $p > 1$ such that  
\[
2^{1 - 1/p} \geq 2 - \varepsilon.
\]
Now consider the space $X = \mathbb{R}^2$ equipped with the norm
\[
\|x\| = \|(x_1, x_2)\| = \bigl( |x_1|^p + |x_2|^p \bigr)^{1/p}, 
\]
where $1 < p < \infty$. Note that the space $X$ is uniformly convex. Let 
\[
x = \bigl(2^{-1/p}, \, 2^{-1/p}\bigr) 
\quad \text{and} \quad 
y = \bigl(-2^{-1/p}, \, 2^{-1/p}\bigr).
\]
So, $\|x+y\|=2^{1-\frac{1}{p}}$ and $\|x-y\|=2^{1-\frac{1}{p}}$.
Also, it is evident that $x,y \in S_X$ such that $x \perp_B y$. Thus, it follows from the definition of $H_{\nu}(X,B)$ that we have
\begin{align*}
H_{\nu}(X,B) \geq \frac{\|x+y\|^{\nu}\|x-y\|^{1-\nu}+\|x+y\|^{1-\nu}\|x-y\|^{\nu}}{2} = 2^{1 - 1/p} \geq 2 - \varepsilon.
\end{align*}

This concludes the proof.
\end{proof}

It is important to note that every uniformly non-square Banach space is super-reflexive (see \cite{pisier2016martingales}) and has the fixed point property (see \cite{garcia2006uniformly}). Thus, we conclude the following result. 
\begin{cor}
Suppose that $X$ is a Banach space satisfying $H_\nu(X,B)< 2$. Then $X$ is super-reflexive and has the fixed point property.
\end{cor}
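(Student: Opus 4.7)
The plan is essentially a one-step deduction that chains together Theorem \ref{t11} with the two classical results cited immediately before the corollary. So the proof will really just amount to spelling out the implications in the correct order.

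First, I would invoke Theorem \ref{t11}: the hypothesis $H_\nu(X,B) < 2$ directly yields that $X$ is uniformly non-square. (Equivalently, one could argue by contrapositive via Theorem \ref{j3}: if $X$ were not uniformly non-square, then $H_\nu(X,B) = 2$, contradicting the assumption.) This is the only genuine piece of reasoning in the corollary.

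Next, I would apply the two classical results the authors recall just before the statement, namely that every uniformly non-square Banach space is super-reflexive (Pisier, \cite{pisier2016martingales}) and has the fixed point property (García-Falset et al., \cite{garcia2006uniformly}). Combining these with the previous step gives both conclusions simultaneously.

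There is no real obstacle here; the corollary is a straightforward consequence packaged for the reader's convenience. The only thing to be careful about is pointing to Theorem \ref{t11} rather than re-deriving uniform non-squareness from scratch, and making sure the two external citations are attached to the correct conclusions (super-reflexivity vs.\ fixed point property). The proof should therefore be written in two short sentences: one applying Theorem \ref{t11}, and one quoting the two external theorems.
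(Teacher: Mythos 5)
Your proposal is correct and matches the paper's (implicit) argument exactly: the paper derives the corollary by combining Theorem \ref{t11} with the two cited classical facts that uniformly non-square spaces are super-reflexive and have the fixed point property. Nothing further is needed.
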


\begin{theorem}
 For a Banach space $X$, $H_{\nu}(X,B)\leq (1-\delta_B(X))^{\nu}+(1-\delta_B(X))^{1-\nu}$.   
\end{theorem}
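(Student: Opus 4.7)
The plan is to bound $H_\nu(X,B)$ by treating the two factors in each Heinz-mean summand asymmetrically: in each product, one factor will be controlled by the definition of $\delta_B(X)$ and the other by the triangle inequality. The key preliminary observation is that if $x,y\in S_X$ with $x\perp_B y$, then directly from the definition of $\delta_B(X)$,
\[
1-\frac{\|x+y\|}{2}\geq \delta_B(X),
\]
which rearranges to $\|x+y\|\leq 2(1-\delta_B(X))$, while the triangle inequality gives $\|x-y\|\leq \|x\|+\|y\|=2$.

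With these two estimates in hand, I would bound each of the two summands in the definition \eqref{aq1} of $H_\nu(X,B)$ separately. For the first summand, I would apply the $\delta_B(X)$-bound to the factor $\|x+y\|^{\nu}$ and the trivial bound to the factor $\|x-y\|^{1-\nu}$, obtaining
\[
\|x+y\|^{\nu}\|x-y\|^{1-\nu}\leq [2(1-\delta_B(X))]^{\nu}\cdot 2^{1-\nu}=2(1-\delta_B(X))^{\nu}.
\]
The second summand is handled symmetrically, yielding $\|x+y\|^{1-\nu}\|x-y\|^{\nu}\leq 2(1-\delta_B(X))^{1-\nu}$. Adding these two inequalities and dividing by $2$ gives, for the fixed pair $(x,y)$, the upper bound $(1-\delta_B(X))^{\nu}+(1-\delta_B(X))^{1-\nu}$.

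Taking the supremum over all $x,y\in S_X$ with $x\perp_B y$ then delivers the stated inequality. I do not anticipate a real obstacle: the essential decision is the asymmetric pairing of factors (one side absorbs the sharp $\delta_B$-bound, the other the triangle bound), after which the manipulation is purely algebraic. It is worth noting in passing that, since $x\perp_B y$ implies $x\perp_B(-y)$, one could also deduce $\|x-y\|\leq 2(1-\delta_B(X))$ and thereby sharpen the conclusion to $H_\nu(X,B)\leq 2(1-\delta_B(X))$; however, the asymmetric argument above is sufficient for the inequality stated in the theorem.
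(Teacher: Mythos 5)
Your argument is exactly the paper's proof: bound $\|x+y\|\leq 2(1-\delta_B(X))$ from the definition of $\delta_B(X)$, bound $\|x-y\|\leq 2$ by the triangle inequality, insert these into the two Heinz-mean summands, and take the supremum. Your closing observation that homogeneity of Birkhoff orthogonality ($x\perp_B y \Rightarrow x\perp_B(-y)$) would also give $\|x-y\|\leq 2(1-\delta_B(X))$ and hence the sharper bound $2(1-\delta_B(X))$ is correct and goes beyond what the paper records.
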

\begin{proof}
For each pair of elements $x, y \in S_X$ such that $x\perp_B y$,  from the definition of $\delta_B(X)$, we have
$\|x + y\|\leq 2(1-\delta_B(X))$. Since $x, y \in S_X$, $\|x-y\|\leq \|x\|+\|y\|=2$. 
Consider, 
\begin{align*}
 &\frac{\|x+y\|^{\nu}\|x-y\|^{1-\nu} + \|x+y\|^{1-\nu}\|x-y\|^{\nu}}{2}\\
 &\hspace{1.5cm}\leq \frac{2^{\nu}(1-\delta_B(X))^{\nu}2^{1-\nu}+2^{1-\nu}(1-\delta_B(X))^{1-\nu}2^{\nu}}{2}\\
 &\hspace{1.5cm}= \frac{2(1-\delta_B(X))^{\nu}+2(1-\delta_B(X))^{1-\nu}}{2}\\
 &\hspace{1.5cm}=(1-\delta_B(X))^{\nu}+(1-\delta_B(X))^{1-\nu}.
 \end{align*}
Taking the supremum over $x,y \in S_X$ with $x\perp_B y$, we have
\[H_{\nu}(X,B)\leq (1-\delta_B(X))^{\nu}+(1-\delta_B(X))^{1-\nu}.\]
\end{proof}
\begin{proposition}
 For a Banach space $X$, $H_{\nu}(X,B)\geq\sqrt{2(1-\rho_B(X))}$.   
\end{proposition}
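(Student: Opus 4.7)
The plan is to combine two elementary facts: every Heinz mean dominates the geometric mean, and both $\|x+y\|$ and $\|x-y\|$ admit natural lower bounds when $x \perp_B y$ on $S_X$. The first is just AM--GM applied to the two summands in the Heinz numerator,
\[
\frac{\|x+y\|^{\nu}\|x-y\|^{1-\nu} + \|x+y\|^{1-\nu}\|x-y\|^{\nu}}{2} \;\geq\; \sqrt{\|x+y\|\,\|x-y\|},
\]
which I would state and use directly.

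Next, I would fix an arbitrary admissible pair $x, y \in S_X$ with $x \perp_B y$ and record two lower estimates. First, since $\rho_B(X)$ is defined as a supremum, $1 - \|x+y\|/2 \leq \rho_B(X)$, which rearranges to $\|x+y\| \geq 2(1-\rho_B(X))$. Second, Birkhoff orthogonality applied with $\lambda=-1$ yields $\|x-y\| \geq \|x\| = 1$. Substituting both into the Heinz--geometric inequality gives
\[
M_{\nu}(\|x+y\|, \|x-y\|) \;\geq\; \sqrt{\|x+y\|\,\|x-y\|} \;\geq\; \sqrt{2\bigl(1-\rho_B(X)\bigr)},
\]
and since the right-hand side is independent of $(x,y)$, taking the supremum on the left over all admissible pairs delivers $H_{\nu}(X,B) \geq \sqrt{2(1-\rho_B(X))}$.

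I do not expect any genuine obstacle here. The only point meriting care is reading the direction of the defining supremum of $\rho_B(X)$ correctly, so that $\|x+y\|$ is bounded \emph{below} by $2(1-\rho_B(X))$ rather than above. As a side remark, the same argument actually produces the sharper bound $H_{\nu}(X,B) \geq 2(1-\rho_B(X))$ by replacing the estimate $\|x-y\| \geq 1$ with $\|x-y\| \geq 2(1-\rho_B(X))$, which follows from $x \perp_B (-y)$ together with the definition of $\rho_B(X)$; since $\rho_B(X) \leq 1/2$ one has $2(1-\rho_B(X)) \geq \sqrt{2(1-\rho_B(X))}$, so this is indeed stronger than the stated inequality.
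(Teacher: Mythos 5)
Your proof is correct and follows essentially the same route as the paper's: both arguments bound $\|x+y\|\geq 2(1-\rho_B(X))$ from the definition of $\rho_B(X)$, bound $\|x-y\|\geq 1$ from Birkhoff orthogonality, and pass through the inequality Heinz mean $\geq$ geometric mean (the paper first discards $\|x-y\|$ and then applies AM--GM to $\|x+y\|^{\nu}$ and $\|x+y\|^{1-\nu}$, which is the same estimate in a slightly different order). Your side remark is also valid: since $x\perp_B y$ implies $x\perp_B(-y)$, the pair $(x,-y)$ is admissible and gives $\|x-y\|\geq 2(1-\rho_B(X))$, yielding the strictly sharper bound $H_{\nu}(X,B)\geq 2(1-\rho_B(X))$, which dominates $\sqrt{2(1-\rho_B(X))}$ because $\rho_B(X)\leq \tfrac12$.
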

\begin{proof}
For each pair of elements $x, y \in S_X$ such that $x\perp_B y$,  from the definition of $\rho_B(X)$, we have $\rho_B(X)\geq 1-\frac{\|x+y\|}{2}$ that is $\|x+y\|\geq 2(1-\rho_B(X))$. Also, $x,y\in S_X$ and $x\perp_B y$ so, in particular, $\|x-y\|\geq \|x\|=1$. \\
From the definition of $H_{\nu}(X,B)$, we have
\begin{align*}
H_{\nu}(X,B)&= \sup\left\{\frac{\|x+y\|^{\nu}\|x-y\|^{1-\nu} + \|x+y\|^{1-\nu}\|x-y\|^{\nu}}{2}: x,y \in S_X, x\perp_B y\right\}\\
 &\geq \sup\left\{\frac{\|x+y\|^{\nu}\|x-y\|^{1-\nu} + \|x+y\|^{1-\nu}\|x-y\|^{\nu}}{2}: x,y \in S_X, \|x-y\|\geq 1 \right\}\\
&\geq \frac{\|x+y\|^{\nu} + \|x+y\|^{1-\nu}}{2}\\
 &\geq \frac{(2(1-\rho_B(X)))^{\nu}+(2(1-\rho_B(X)))^{1-\nu}}{2}\\
 &\geq \sqrt{(2(1-\rho_B(X)))^{\nu}(2(1-\rho_B(X)))^{1-\nu}}\\
 &=\sqrt{2(1-\rho_B(X))}.
 \end{align*}
 Therefore, $H_{\nu}(X,B)\geq\sqrt{2(1-\rho_B(X))}$.
\end{proof}

\begin{proposition}
For a Banach space $X$, $ \mu'(X,B)[H_{\nu}(X,B)]^2\geq2$.
\end{proposition}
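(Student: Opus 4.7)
The plan is to combine the arithmetic--geometric mean inequality applied to the two terms defining the Heinz mean with the definition of $\mu'(X,B)$, using only the fact that Birkhoff orthogonality forces both side lengths to be at least $1$.

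First I would fix an arbitrary pair $x,y \in S_X$ with $x \perp_B y$. From Definition \ref{k51}, taking $\lambda = -1$ and $\lambda = 1$ respectively, we have $\|x - y\| \ge \|x\| = 1$ and $\|x+y\| \ge 1$. Next I would apply the AM--GM inequality to the two summands in the Heinz mean:
\begin{align*}
\frac{\|x+y\|^{\nu}\|x-y\|^{1-\nu} + \|x+y\|^{1-\nu}\|x-y\|^{\nu}}{2}
&\ge \sqrt{\|x+y\|^{\nu}\|x-y\|^{1-\nu}\cdot \|x+y\|^{1-\nu}\|x-y\|^{\nu}} \\
&= \sqrt{\|x+y\|\,\|x-y\|}.
\end{align*}
Taking the supremum over all admissible pairs yields $[H_{\nu}(X,B)]^{2} \ge \|x+y\|\,\|x-y\|$ for the specific pair $(x,y)$ above (since the supremum dominates any particular value).

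Then I would use that, by the definition of the rectangular constant, $\mu'(X,B) \ge \dfrac{2}{\|x+y\|}$ for this same pair. Multiplying the two bounds gives
\begin{align*}
\mu'(X,B)\,[H_{\nu}(X,B)]^{2} \;\ge\; \frac{2}{\|x+y\|}\cdot \|x+y\|\,\|x-y\| \;=\; 2\,\|x-y\| \;\ge\; 2,
\end{align*}
which is the desired inequality.

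There is no real obstacle here: the argument is essentially a one-line chain built from AM--GM plus the Birkhoff-orthogonality lower bound $\|x - y\| \ge 1$. The only subtlety worth flagging is that $\mu'(X,B)$ and $H_{\nu}(X,B)$ are suprema taken over the same set of pairs $(x,y) \in S_X \times S_X$ with $x \perp_B y$, so each of the two intermediate inequalities already holds with the global constants on the left-hand side for every admissible pair, and the product can safely be evaluated on a single fixed pair before concluding.
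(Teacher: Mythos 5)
Your proof is correct and uses essentially the same ingredients as the paper's: the AM--GM inequality applied to the two Heinz-mean summands, the lower bound $\|x-y\|\ge 1$ from Birkhoff orthogonality, and $\mu'(X,B)\ge 2/\|x+y\|$. The only difference is cosmetic ordering --- the paper first replaces $\|x-y\|$ by $1$ and then applies AM--GM to the resulting expression in $\|x+y\|$ alone, whereas you apply AM--GM first and cancel $\|x+y\|$ at the end --- so the two arguments are the same in substance.
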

\begin{proof}
According to the definition of $\mu'(X,B)\geq\frac{2}{\|x+y\|}$ that is, $\|x+y\|\geq \frac{2}{\mu'(X,B)}$.
Also, $x,y\in S_X$ and $x\perp_B y$ so, $\|x-y\|\geq \|x\|=1$. 
\begin{align*}
H_{\nu}(X,B)&= \sup\left\{\frac{\|x+y\|^{\nu}\|x-y\|^{1-\nu} + \|x+y\|^{1-\nu}\|x-y\|^{\nu}}{2}: x,y \in S_X, x\perp_B y\right\}\\
 &\geq \sup\left\{\frac{\|x+y\|^{\nu}\|x-y\|^{1-\nu} + \|x+y\|^{1-\nu}\|x-y\|^{\nu}}{2}: x,y \in S_X, \|x-y\|\geq 1 \right\}\\
&\geq \frac{\|x+y\|^{\nu} + \|x+y\|^{1-\nu}}{2}\\
 &\geq \frac{\left( \frac{2}{\mu'(X,B)} \right)^{\nu} + \left( \frac{2}{\mu'(X,B)} \right)^{1-\nu}}{2}\\
 &\geq \sqrt{\left( \frac{2}{\mu'(X,B)} \right)^{\nu}\left( \frac{2}{\mu'(X,B)} \right)^{1-\nu}}\\
 &=\sqrt{\frac{2}{\mu'(X,B)}}.
 \end{align*}
Therefore, $ \mu'(X,B)[H_{\nu}(X,B)]^2\geq2$.
\end{proof}

In Hilbert spaces, orthogonality is symmetry; specifically, $x \perp y$ implies $y \perp x$., this symmetric property is not generally holds in Banach spaces with respect to Birkhoff orthogonality.  
James \cite{james1947inner} demonstrated that a Banach space $X$ with $\dim X \geq 3$ possesses a norm induced by an inner product if and only if Birkhoff orthogonality is symmetric. The condition $\dim X \geq 3$ in the aforementioned result is necessary. This result, however, does not hold true in the case of two-dimensional spaces; in fact, there are numerous examples of real normed spaces that are two-dimensional in which Birkhoff orthogonality shows symmetrical behavior. These spaces are termed Radon planes, named after Radon, who was the first to characterize them. 
\begin{definition}
A two-dimensional normed space in which Birkhoff orthogonal
ity is symmetric is called a Radon plane.
\end{definition}
Du et al. \cite{du2025some} considered the constant $A_2(X,B)$ in Radon
planes and showed that the following results are true for any Radon plane $X$:
\begin{enumerate}
    \item $A_2(X,B)\leq \frac{3}{2}$.
    \item $A_2(X,B)=\frac{3}{2}$
 if and only if its unit sphere 
 is an affine-regular hexagon.
\end{enumerate}
 We analyze the $H_\nu(X,B)$ constant in Radon planes and illustrate that the range of values decreases. Furthermore, the constants $H_\nu(X,B)$ of the Radon planes with unit spheres are derived.

Since $A_2(X,B)\leq \frac{3}{2}$ holds for any Radon plane, we can get the following result
easily by \eqref{eq1}.
\begin{proposition}\label{op1}
For a Radon plane $X$, we have $H_\nu(X,B)\leq \frac{3}{2}$.
\end{proposition}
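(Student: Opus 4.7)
The plan is to obtain the bound as an immediate consequence of the sandwich inequality \eqref{eq1} together with the known bound on $A_2(X,B)$ for Radon planes recalled just before the statement. Since the work has already been done in establishing \eqref{eq1} (which gives $J(X,B) \leq H_{\nu}(X,B) \leq A_2(X,B)$ for any Banach space, in particular for any Radon plane), the only thing left is to invoke the result of Du et al. that $A_2(X,B) \leq \frac{3}{2}$ for every Radon plane $X$.

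More concretely, I would proceed as follows. First, fix $\nu \in [0,1]$ and recall from \eqref{aq1} that for any $x,y \in S_X$ with $x \perp_B y$, the Heinz mean expression is bounded above by the arithmetic mean $\frac{\|x+y\|+\|x-y\|}{2}$; this was already established in the string of inequalities leading to \eqref{eq1}. Next, taking the supremum over all such pairs $x,y$ yields $H_\nu(X,B) \leq A_2(X,B)$. Finally, since $X$ is assumed to be a Radon plane, we may invoke the result of Du et al.\ \cite{du2025some} stating $A_2(X,B) \leq \frac{3}{2}$, and chain the two inequalities to conclude $H_\nu(X,B) \leq \frac{3}{2}$.

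There is no real obstacle here: the proposition is essentially a corollary, and the author has flagged it as such. The only point worth double-checking is that the right-hand inequality of \eqref{eq1} holds uniformly in $\nu \in [0,1]$, which is clear from the Heinz-arithmetic mean comparison quoted just after the definition of $H_\nu(X,B)$. Hence the proof is a one-line chain of inequalities and requires no further auxiliary construction.
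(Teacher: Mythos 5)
Your proposal is correct and is exactly the paper's argument: the authors also derive the bound directly from the right-hand inequality of \eqref{eq1} combined with the result of Du et al.\ that $A_2(X,B)\leq \frac{3}{2}$ for Radon planes. No differences worth noting.
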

Moreover, the following
example will indicate that the upper bound shown in the above result is also sharp.

\begin{example}
Let $X$ be a Radon plane $l_\infty - l_1$, that is, the space $\mathbb{R}^2$ with the norm
\[
\|(x_1, x_2)\| = \begin{cases}
\|(x_1, x_2)\|_\infty & (x_1 x_2 \geq 0), \\
\|(x_1, x_2)\|_1 & (x_1 x_2 \leq 0).
\end{cases}
\]
Then  $H_\nu(X,B)=\frac{3}{2}$.
\end{example}
\proof Take $x = (1,1)$, $y = (-\frac{1}{2}, \frac{1}{2})$, then $x, y \in S_X$. In addition, one can easily
obtain $x \perp_B y$, from the discussion below:
\begin{itemize}
  \item \textbf{Case 1:} $-2\leq \lambda\leq -2$.
  Then,
  \[
  \|x + \lambda y\| = \left\| \left( 1-\frac{\lambda}{2}, 1+\frac{\lambda}{2} \right) \right\| = \max\left\{\left|1-\frac{\lambda}{2}\right|, \left|1+\frac{\lambda}{2}\right|\right\}=\left|1+\frac{\lambda}{2}\right|\geq 1=\|x\|.
  \]
 \item \textbf{Case 2:} $\lambda\leq -2$ and $ \lambda\geq 2$.
  Then,
  \[
\|x + \lambda y\| = \left\| \left( 1-\frac{\lambda}{2}, 1+\frac{\lambda}{2} \right) \right\| = \left|1-\frac{\lambda}{2}\right|+ \left|1+\frac{\lambda}{2}\right|\geq 1=\|x\|.
  \]
\end{itemize}
Now, $\|x+ y\|=\frac{3}{2}$ and $\| x-y\|=\frac{3}{2}$.\\
From the Proposition \eqref{op1} and the definition of $H_\nu(X,B)$, we have
\[
\dfrac{3}{2}\geq H_\nu(X,B) \geq \frac{\|x+y\|^{\nu}\|x-y\|^{1-\nu} + \|x+y\|^{1-\nu}\|x-y\|^{\nu}}{2}=\dfrac{3}{2}.
\]
Therefore, $H_\nu(X,B)=\frac{3}{2}$.
This completes the proof.

\begin{theorem}
Suppose that $X$ is a Radon plane. Then  $H_\nu(X,B)=\frac{3}{2}$ if and only if $S_X$ is an affine regular hexagon.
\end{theorem}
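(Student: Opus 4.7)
The plan is to handle the two implications by leveraging the sandwich inequality $J(X,B)\le H_\nu(X,B)\le A_2(X,B)$ from \eqref{eq1} together with Proposition \ref{op1} and the characterization of the hexagonal unit sphere via $A_2(X,B)$ due to Du et al.

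For the ``only if'' direction I would proceed as follows. Assume $H_\nu(X,B)=\tfrac32$. Since $X$ is a Radon plane, the cited result of Du et al.\ gives $A_2(X,B)\le \tfrac32$. Combining this with $H_\nu(X,B)\le A_2(X,B)$ from \eqref{eq1} forces $A_2(X,B)=\tfrac32$, and then the second Du et al.\ characterization immediately yields that $S_X$ is an affine regular hexagon. So this direction is essentially a one-line reduction to a known result.

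For the ``if'' direction I would argue that any two-dimensional normed space whose unit sphere is an affine regular hexagon is linearly isometric to the Radon plane $\ell_\infty\text{-}\ell_1$ of the preceding Example: any two affine regular hexagons in $\mathbb{R}^2$ are related by an invertible linear map $T$, and transporting the norm along $T$ turns $T$ into an isometry between the two spaces. Since linear isometries preserve the norm and hence Birkhoff orthogonality, they preserve the constant $H_\nu(\cdot,B)$. From the Example we already have $H_\nu(\ell_\infty\text{-}\ell_1,B)=\tfrac32$, so $H_\nu(X,B)=\tfrac32$. Alternatively, one can dispense with the isometry argument and work directly inside $X$: using the symmetry of the hexagon, pick $x\in S_X$ at a vertex and $y\in S_X$ at the midpoint of a suitable edge so that $y$ is parallel to an edge meeting $x$; then $x\perp_B y$ follows from the standard supporting-line criterion at $x$, and the hexagonal geometry gives $\|x\pm y\|=\tfrac32$, yielding $H_\nu(X,B)\ge \tfrac32$. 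Combined with Proposition \ref{op1}, this forces equality.

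The main obstacle is justifying the ``if'' direction cleanly for an abstract Radon plane, since one must produce witnesses $x,y$ without coordinates. I expect the cleanest write-up to invoke the linear-equivalence reduction to $\ell_\infty\text{-}\ell_1$, because isometric invariance of $H_\nu(\cdot,B)$ is immediate from its definition and the Example has already done the numerical verification; any attempt to check $x\perp_B y$ and compute $\|x\pm y\|$ intrinsically inside an arbitrary hexagonal Radon plane will amount to the same reduction in disguise.
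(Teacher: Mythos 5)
Your proposal is correct. The \enquote{only if} direction is identical to the paper's: $\tfrac32 = H_\nu(X,B)\le A_2(X,B)\le\tfrac32$ forces $A_2(X,B)=\tfrac32$, and Du et al.'s characterization finishes it. For the \enquote{if} direction, the paper carries out exactly the \emph{direct} construction you list as your second alternative: with vertices $\pm u,\pm v,\pm(u+v)$ it takes $x=v$ (a vertex) and $y=u+\tfrac12 v$ (the midpoint of the edge from $u$ to $u+v$), checks $x\perp_B y$, and computes $\|x\pm y\|=\tfrac32$ by writing $x\pm y$ as $\tfrac32$ times a convex combination of adjacent vertices. Your preferred route --- reducing to the $\ell_\infty$\text{-}$\ell_1$ example via a linear map carrying one affine regular hexagon onto the other, and noting that linear isometries preserve Birkhoff orthogonality and hence $H_\nu(\cdot,B)$ --- is equally valid and buys you a reuse of the numerical work already done in the Example, at the cost of having to state and justify the isometric invariance of the constant and the linear equivalence of centered affine regular hexagons; the paper's intrinsic computation avoids that overhead and, contrary to your closing remark, is not merely the reduction in disguise, since the verification of $x\perp_B y$ via the supporting line at the vertex $v$ and the norm computations are short and coordinate-free. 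Either write-up is acceptable.
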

\begin{proof}
Assume that  $H_\nu(X,B)=\frac{3}{2}$. Then, from \eqref{eq1}, we have $\frac{3}{2}=H_\nu(X,B)\leq A_2(X,B)$. Since $A_2(X,B)\leq \frac{3}{2}$ for Radon plane. Therefore, $A_2(X,B)=\frac{3}{2}$.
Hence $S_X$ is an affine regular hexagon.

Conversely, if $S_X$ is an affine regular hexagon, then we can assume that there exist
$ u,v \in S_X$ satisfy that $\pm u, \pm v, \pm (u + v)$ are the vertices of $S_X$, see the following figure\\
\begin{center}
\begin{tikzpicture}[scale=2, thick]
\coordinate (O) at (0,0);
\coordinate (A) at (60:1);    
\coordinate (B) at (0:1);     
\coordinate (C) at (-60:1);   
\coordinate (D) at (240:1);   
\coordinate (E) at (180:1);   
\coordinate (F) at (120:1);   

\draw (A)--(F)--(E)--(D)--(C)--(B)--cycle;
\draw (O)--(A)--(B)--(O);
\draw (O)--(C)--(D)--(O);
\draw (O)--(E)--(F)--(O);

\node[above] at (A) {$u$};
\node[above right] at (F) {$u+v$};
\node[right] at (B) {$v$};
\node[left] at (E) {$-v$};
\node[below left] at (D) {$-u$};
\node[below right] at (C) {$-(u+v)$};
\end{tikzpicture}
\end{center}
 Let $x =v$, $y = u+\dfrac{v}{2}$.Then it is evident that $x,y \in S_X$ and $x\perp_B y$.\\
 Now,
\[
\|x + y\| = \left\| v + \left( u + \tfrac{1}{2}v \right) \right\| 
= \left\| (u + v) + \tfrac{1}{2}v \right\| 
= \tfrac{3}{2} \left\| \tfrac{2}{3}(u + v) + \tfrac{1}{3}v \right\| 
= \tfrac{3}{2},
\]
and
\[
\|x - y\| = \left\| v - \left( u + \tfrac{1}{2}v \right) \right\| 
= \left\| -u + \tfrac{1}{2}v \right\| 
= \tfrac{3}{2} \left\| \tfrac{2}{3}(-u) + \tfrac{1}{3}v \right\| 
= \tfrac{3}{2}.
\]
\noindent Then it follows from the definition of $H_\nu(X,B)$ and the fact that 
$H_\nu(X,B)\leq \frac{3}{2}$ for any Radon plane that we derive
\[
\dfrac{3}{2}\geq H_\nu(X,B) \geq \frac{\|x+y\|^{\nu}\|x-y\|^{1-\nu} + \|x+y\|^{1-\nu}\|x-y\|^{\nu}}{2}=\dfrac{3}{2}.
\]
Therefore, $H_\nu(X,B)= \frac{3}{2}$.
This completes the proof.
\end{proof}





\end{document}